\numberwithin{equation}{section}
\newtheorem{theorem}{Theorem}
\newtheorem{corollary}[theorem]{Corollary}
\newenvironment{proof}[1][Proof]{\noindent\textbf{#1} }{\ \rule{0.5em}{0.5em}}
\begin{document}

\title{On the Relation Between $G_{2}^*$ Structures and Almost Paracontact Structures}
\begin{titlepage}
\author{
\c{S}irin AKTAY\footnote{E.mail:
sirins@eskisehir.edu.tr, orcid id: https://orcid.org/0000-000302792-3481
} \\
{\small Department of Mathematics, Eski\c{s}ehir Technical University, 26470 Eski\c{s}ehir, Turkey}}

\date{ }

\maketitle

\bigskip

\begin{abstract}
\noindent \noindent We investigate 7-dimensional almost para-contact metric structures induced by the 3-forms of $G_{2}^*$ structures. We calculate the projections that determine to which class the almost para-contact structure belongs, by using the properties of the $G_{2}^*$ structures.

\textbf{Keywords:} Almost para-contact metric structure, $G_{2}^*$ structure, normal structure, paracontact structure.

\textbf{MSC 2010:} 53C25, 53D10.

\noindent
\end{abstract}
\end{titlepage}

\section{Introduction}

Manifolds with almost paracontact structures were first defined by Kaneyuki and Williams in \cite{kaneyuki}. Zamkovoy provided all the technical apparatus needed in \cite{zamkovoycanonical}. After these remarkable works, almost paracontact metric manifolds were written as a direct sum of $12$ subspaces  with respect to the symmetry properties of the Levi-Civita covariant derivative of the fundamental 2-form in \cite{nakova, zamkovoyrevisited}.

Almost paracontact metric structures induced by manifolds with $G_{2}^*$ structures were constructed in \cite{bizimparacontact} and existence of some classes were investigated. Our aim in this study is to get further results by calculating projections given in \cite{zamkovoyrevisited} on each of twelve subspaces of almost paracontact metric structures. Also, we provide some examples.

\section{Preliminaries}

Consider $\mathbb{R}^7$ with the metric $g_{4,3}$ having the signature $(-,-,-,-,+,+,+)$. The group $G_{2}^*$ is defined as
$$G_{2}^*=\{ g \in GL(7,\mathbb{R}) \ | \ g^*\varphi=\varphi \},$$
where
$$\varphi=-e^{127}-e^{135}+e^{146}+e^{236}+e^{245}-e^{347}+e^{567}.$$

The basis $e^1, \ldots, e^7$ is the metric dual of the standard basis $\{e_1,\ldots,e_7\}$. If the structure group of a 7-dimensional oriented manifold $M$ reduces to the group $G_{2}^*$, then $M$ is called a manifold with $G_{2}^*$ structure. Then, $M$ has a 3-form $\varphi$ such that
for all $p\in M$, the space $(T_pM,\varphi_p)$ is isomorphic to $(\mathbb{R}^7,\varphi)$. The 3-form $\varphi$ is said to be the $G_{2}^*$ structure or the fundamental 3-form of $M$. $\varphi$ induces a metric $g_{4,3}$ with signature $(-,-,-,-,+,+,+)$, a volume form $d_{vol}$, a cross product $P$ on $M$ by the following equations:
\begin{equation}\label{ucform}\varphi(X,Y,Z)=g_{4,3}(P(X,Y),Z),\end{equation}
\begin{equation}\label{varphi}(X\lrcorner\varphi)\wedge(Y\lrcorner\varphi)\wedge\varphi=6g_{4,3}(X,Y)d_{vol},\end{equation}
for all vector fields $X, Y, Z$ \cite{bryant, kath}.

If an odd dimensional smooth manifold $M$ has an endomorphism $\phi$, a vector field $\xi$ and a 1-form $\eta$ satisfying
$$\phi^2=I-\eta\otimes\xi, \ \ \eta(\xi)=1,$$
and if there is a distribution $$\mathbb{D}: p\in M\longrightarrow \mathbb{D}_p\subset T_pM$$
such that $\mathbb{D}_p=Ker \eta$, then $M$ is called a manifold with an almost paracontact structure $(\phi,\xi,\eta)$. If $M$ also has a semi-Riemannian metric $g$ with the property that
$$g(\phi X,\phi Y)=-g(X,Y)+\eta(X)\eta(Y),$$
the quadruple $(\phi,\xi,\eta,g)$ is called an almost paracontact metric structure on $M$ with compatible metric $g$.

The 2-form defined by $\Phi(X,Y)=g(\phi X,Y)$ is called the fundamental 2-form of the almost paracontact metric structure.

There are $2^{12}$ classes of almost paracontact metric structures. The space $\mathcal{F}$ the Levi-Civita covariant derivative of the fundamental 2-form belongs to was decomposed into subspaces $W_i$, $i=1,2,3,4$ and then to twelve subspaces $\mathbb{G}_i$, $i=1,\ldots,12$. Then $\nabla\Phi$ can be represented uniquely in the form $\nabla\Phi=F^{W_1}+F^{W_2}+F^{W_3}+F^{W_4}$ and $\nabla\Phi=F^1+F^2+\ldots+F^{12}$, where $F^{W_i}\in W_i$ and $F^{i}\in \mathbb{G}_i$.

An almost paracontact metric manifold $M$ with structure $(\phi,\xi,\eta,g)$ is in the class $\mathbb{G}_i\oplus\mathbb{G}_j\oplus\ldots$ if and only if $\nabla\Phi$ is the sum of corresponding projections $F^{i}+F^{j}+\ldots$. For further information on the classification and projections, see \cite{nakova, zamkovoyrevisited}.

Let $M$ be a manifold with $G_2^*$ structure $\varphi$ with the metric $g_{4,3}$ and the cross product P. Choose a vector field $\xi$ satisfying $g_{4,3}(\xi,\xi)=-1$. Then for $\phi(X)=P(\xi,X)$, $g=-g_{4,3}$ and $\eta(X)=g(\xi, X)$, the quadruple $(\phi,\xi,\eta,g)$ is an almost paracontact metric structure on $M$ induced by the $G_2^*$ structure $\varphi$. For the Levi-Civita covariant derivatives of metrics, we have $\nabla^g=\nabla^{g_{4,3}}$ and we denote this derivatives by $\nabla$. We have the relation
\begin{equation}\label{covariant}(\nabla_X\Phi)(Y,Z)=-(\nabla_X\varphi)(\xi,Y,Z)-\varphi(\nabla_X\xi,Y,Z),\end{equation}
see \cite{bizimparacontact}. For almost contact structures induced by manifolds with $G_2$ structures, refer to \cite{bizimcontact}.

\section{Projections identifying almost paracontact structures}

In this section we assume that $M$ is a manifold with $G_2^*$ structure $\varphi$, the metric $g_{4,3}$, the
cross product $P$ and $\xi$ is a vector field with the property that $g_{4,3}(\xi,\xi)=-1$. We denote the almost paracontact metric structure obtained from the $G_2^*$ structure $\varphi$ by $(\phi,\xi,\eta,g)$. We calculate the projections $F^{i}$ and obtain conditions on $\xi$ such that the corresponding almost paracontact structure has a summand from a certain class. Vector fields are denoted by capital letters, such as $X$, $Y$, $Z$.

\begin{theorem}\label{thm1} Let $\varphi$ be any $G_{2}^*$ structure and $(\phi, \xi, \eta, g)$ an almost para-contact metric structure induced by $\varphi$. Then $\mathcal{F}^{12}= 0$ if and only if $\nabla_{\xi}\xi= 0$.
\end{theorem}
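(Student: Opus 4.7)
The plan is to combine equation (\ref{covariant}) with the explicit formula for the twelfth projection $F^{12}$ recorded in \cite{zamkovoyrevisited}. In the Nakova--Zamkovoy decomposition the class $\mathbb{G}_{12}$ is the smallest, purely ``$\xi$-directional'' piece: a tensor in $\mathbb{G}_{12}$ is completely parameterised by the restriction $(\nabla_{\xi}\Phi)(\xi,\,\cdot\,)$, and $F^{12}$ picks out exactly this component from a general $\nabla\Phi$. Thus the strategy is to compute $(\nabla_{\xi}\Phi)(\xi,Z)$ in terms of $G_{2}^{*}$ data and recognise it as a pairing against $\nabla_{\xi}\xi$.

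First I would specialise (\ref{covariant}) to $X=Y=\xi$, obtaining
\[
(\nabla_{\xi}\Phi)(\xi,Z)=-(\nabla_{\xi}\varphi)(\xi,\xi,Z)-\varphi(\nabla_{\xi}\xi,\xi,Z).
\]
The first summand vanishes, because $\nabla$ is metric so $\nabla_{\xi}\varphi$ is still totally skew in its three slots and hence kills two equal inputs. For the second I would use (\ref{ucform}), $\phi(W)=P(\xi,W)$, and $g=-g_{4,3}$ together with the antisymmetry of $P$, rewriting the term as $-g(\phi(\nabla_{\xi}\xi),Z)$. This gives the clean identity $(\nabla_{\xi}\Phi)(\xi,Z)=-g(\phi(\nabla_{\xi}\xi),Z)$ for every vector field $Z$.

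Next I would show that this tensor vanishes identically if and only if $\nabla_{\xi}\xi=0$. Differentiating $g(\xi,\xi)=1$ yields $g(\nabla_{\xi}\xi,\xi)=0$, so $\nabla_{\xi}\xi\in\ker\eta$. From $\phi^{2}=I-\eta\otimes\xi$ and $\phi\xi=0$ one reads off $\ker\phi=\mathrm{span}(\xi)$, so $\phi$ is injective on $\ker\eta$; by nondegeneracy of $g$ the condition $g(\phi(\nabla_{\xi}\xi),Z)=0$ for all $Z$ therefore upgrades to $\phi(\nabla_{\xi}\xi)=0$, and then to $\nabla_{\xi}\xi=0$.

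Finally, combining these two steps with the projector in \cite{zamkovoyrevisited}: since $F^{12}$ depends linearly on $(\nabla_{\xi}\Phi)(\xi,\,\cdot\,)$ and is nonzero precisely when that restricted tensor is nonzero, we obtain $F^{12}=0\Leftrightarrow(\nabla_{\xi}\Phi)(\xi,\,\cdot\,)=0\Leftrightarrow\nabla_{\xi}\xi=0$. The main technical hurdle I expect is simply to quote correctly the formula for $F^{12}$ from \cite{zamkovoyrevisited} and verify that it is indeed governed by $(\nabla_{\xi}\Phi)(\xi,\,\cdot\,)$ with no additional surviving contribution; modulo that citation, the proof reduces to the short calculation outlined above.
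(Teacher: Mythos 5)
Your proposal is correct and follows essentially the same route as the paper: both specialise (\ref{covariant}) to $X=Y=\xi$, discard $(\nabla_{\xi}\varphi)(\xi,\xi,\cdot)$ by skew-symmetry, identify the surviving term as a $g_{4,3}$-pairing of $Z$ against $\phi(\nabla_{\xi}\xi)$, and conclude by nondegeneracy together with the fact that $\phi$ cannot annihilate the $\ker\eta$-vector $\nabla_{\xi}\xi$ (the paper realises this last step by substituting $Y\mapsto\phi Y$ and using $P(\xi,P(\xi,Y))=\phi^{2}Y$, which is the same computation). Your handling of the converse via injectivity of $\phi$ on $\ker\eta$ is, if anything, slightly cleaner than the paper's remark about $g_{4,3}(\nabla_{\xi}\xi,\nabla_{\xi}\xi)$, which as written could vanish for a null vector, though the paper's nondegeneracy argument in the preceding sentence already suffices.
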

\begin{proof}
Since
\begin{eqnarray*}
F(\xi,\xi,\phi^2Z)&=&(\nabla_{\xi}\Phi)(\xi,\phi^2Z)\\
&=&-(\nabla_{\xi}\varphi)(\xi,\xi,\phi^2Z)-\varphi(\nabla_{\xi}\xi,\xi,\phi^2Z)\\
&=&-\varphi(\nabla_{\xi}\xi,\xi,\phi^2Z)\\
&=&-\varphi(\nabla_{\xi}\xi,\xi,Z),
\end{eqnarray*}
we get
\begin{equation}\label{F12}\mathcal{F}^{12}(X,Y,Z)=\eta(X)\{-\eta(Y)\varphi(\nabla_{\xi}\xi,\xi,Z)+\eta(Z)\varphi(\nabla_{\xi}\xi,\xi,Y)\}.\end{equation}

Clearly if $\nabla_{\xi}\xi=0$, then $\mathcal{F}^{12}=0$. That is the a.p.m.s. does not contain a summand from the class $\mathbb{G}_{12}$.

Now replace $Z$ by $\xi$ and $Y$ by $\phi(Y)$ in (\ref{F12}). Then
$$\begin{array}{rcl}
\mathcal{F}^{12}(X,\phi Y,\xi)&=&\eta(X)\{\varphi(\nabla_{\xi}\xi,\xi,\phi Y)\}\\
&=&\eta(X)\{\varphi(\xi,\phi Y,\nabla_{\xi}\xi)\}\\
&=&\eta(X)\{g_{4,3}(P(\xi,P(\xi,Y)), \nabla_{\xi}\xi)\}\\
&=&\eta(X)\{g_{4,3}(Y,\nabla_{\xi}\xi)\}
\end{array}
$$

If $\nabla_{\xi}\xi\neq 0$, then $g_{4,3}(Y,\nabla_{\xi}\xi)\neq 0$ since the metric $g_{4,3}$ is non-degenerate. Thus $\mathcal{F}^{12}(X,\phi \nabla_{\xi}\xi,\xi)=\eta(X)\{g_{4,3}(\nabla_{\xi}\xi,\nabla_{\xi}\xi)\}\neq 0$, or equivalently, if $\mathcal{F}^{12}=0$, then $\nabla_{\xi}\xi=0$.
\end{proof}

Now we calculate the projection $\mathcal{F}^{11}$:

\begin{theorem}\label{thm2}If the $G_{2}^*$ structure $\varphi$ has the property that $\xi\lrcorner\nabla_{\xi}\varphi=0$ and $\xi$ satisfies $\nabla_{\xi}\xi=0$, then $\mathcal{F}^{11}=0$.
\end{theorem}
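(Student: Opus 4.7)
The plan is to combine formula (\ref{covariant}) with the two hypotheses to show that the entire tensor $F(\xi,Y,Z):=(\nabla_{\xi}\Phi)(Y,Z)$ vanishes, and then to invoke the fact from the Zamkovoy--Nakova classification that the projection $\mathcal{F}^{11}$ depends only on this ``vertical'' slice of $\nabla\Phi$.

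First I would specialize (\ref{covariant}) at $X=\xi$, obtaining
\[
(\nabla_{\xi}\Phi)(Y,Z) \;=\; -(\nabla_{\xi}\varphi)(\xi,Y,Z) \;-\; \varphi(\nabla_{\xi}\xi,Y,Z).
\]
The assumption $\xi\lrcorner\nabla_{\xi}\varphi=0$ makes the first summand vanish for all $Y,Z$, and $\nabla_{\xi}\xi=0$ makes the second vanish because $\varphi$ is multilinear. Therefore $F(\xi,Y,Z)=0$ for every pair $Y,Z$ of vector fields.

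Second, I would recall the explicit formula for $\mathcal{F}^{11}$ from \cite{zamkovoyrevisited}. Mirroring what was done for $\mathcal{F}^{12}$ in Theorem \ref{thm1}, the $\mathbb{G}_{11}$-projection is assembled from evaluations of $F$ with $\xi$ in its first slot and horizontal (possibly $\phi$-conjugated) vectors in its remaining two slots. Since each such evaluation is already zero by the first step, $\mathcal{F}^{11}$ vanishes identically.

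The main obstacle is the bookkeeping involved in writing down the explicit $\mathbb{G}_{11}$-projection and confirming that no term secretly depends on $F(X,\cdot,\cdot)$ for $X$ transverse to $\xi$; once this is verified the conclusion is immediate. I would not expect a converse for free here, in contrast with Theorem \ref{thm1}, because $\mathcal{F}^{11}=0$ imposes only a symmetric--horizontal constraint on $(\nabla_{\xi}\Phi)$ and is strictly weaker than the pointwise vanishing of $F(\xi,\cdot,\cdot)$ used in our hypothesis.
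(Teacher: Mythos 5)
Your proposal is correct and essentially matches the paper's argument: the paper also starts from the projection formula $\mathcal{F}^{11}(X,Y,Z)=\eta(X)F(\xi,\phi^2Y,\phi^2Z)$, rewrites $F(\xi,\cdot,\cdot)$ via (\ref{covariant}) as $-(\nabla_{\xi}\varphi)(\xi,\cdot,\cdot)-\varphi(\nabla_{\xi}\xi,\cdot,\cdot)$, and kills both terms with the two hypotheses. Your worry about hidden dependence on $F(X,\cdot,\cdot)$ for $X$ transverse to $\xi$ is resolved exactly as you hoped, since the $\mathbb{G}_{11}$ projection involves only the slice $F(\xi,\phi^2Y,\phi^2Z)$.
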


\begin{proof}
$$
\begin{array}{rcl}
\mathcal{F}^{11}(X,Y,Z)&=&\eta(X)F(\xi,\phi^2Y,\phi^2Z)\\
&=&\eta(X)\{-(\nabla_{\xi}\varphi)(\xi,\phi^2Y,\phi^2Z)-\varphi(\nabla_{\xi}\xi,\phi^2Y,\phi^2Z)\}\\
&=&\eta(X)\left\{-(\nabla_{\xi}\varphi)(\xi,Y,Z)-\varphi(\nabla_{\xi}\xi,Y,Z)\right.\\
&&+\left.\eta(Z)\varphi(\nabla_{\xi}\xi,Y,\xi)+\eta(Y)\varphi(\nabla_{\xi}\xi,\xi,Z)\right\}
\end{array}
$$
and clearly if $\xi\lrcorner\nabla_{\xi}\varphi=0$ and $\nabla_{\xi}\xi=0$, then $\mathcal{F}^{11}=0$.
\end{proof}

In this case, the structure does not contain a part from $\mathbb{G}_{11}$. Note that nearly-parallel $G_{2}^*$ structures satisfy $\xi\lrcorner\nabla_{\xi}\varphi=0$.

More generally:

\begin{theorem} Let $\xi\lrcorner\nabla_{\xi}\varphi=0$. If there is a vector field $\xi$ satisfying $g_{4,3}(\xi,\xi)=-1$ and
$$-P(\nabla_{\xi}\xi,Y)-\varphi(\nabla_{\xi}\xi,Y,\xi)\xi+P(\nabla_{\xi}\xi,\xi)=0,$$
then $\mathcal{F}^{11}=0$. Otherwise, $\mathcal{F}^{11}\neq 0$.
\end{theorem}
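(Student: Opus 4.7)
The plan is to continue the manipulation of $\mathcal{F}^{11}$ begun in Theorem~\ref{thm2}, re-express the result as a single pairing against $g_{4,3}$, and then invoke the non-degeneracy of $g_{4,3}$ to translate the vanishing of $\mathcal{F}^{11}$ into the vector equation displayed in the theorem.

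First I would use the standing hypothesis $\xi\lrcorner\nabla_\xi\varphi=0$ to discard the $(\nabla_\xi\varphi)(\xi,Y,Z)$ summand in the formula obtained in the proof of Theorem~\ref{thm2}, leaving
$$\mathcal{F}^{11}(X,Y,Z)=\eta(X)\bigl\{-\varphi(\nabla_\xi\xi,Y,Z)+\eta(Z)\,\varphi(\nabla_\xi\xi,Y,\xi)+\eta(Y)\,\varphi(\nabla_\xi\xi,\xi,Z)\bigr\}.$$
The next step is to convert every $\varphi$-term here into $g_{4,3}(\cdot,Z)$ by means of (\ref{ucform}). The first and third summands become $g_{4,3}(P(\nabla_\xi\xi,Y),Z)$ and $\eta(Y)g_{4,3}(P(\nabla_\xi\xi,\xi),Z)$. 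For the middle summand I would absorb the scalar $\eta(Z)$ into the pairing via $g=-g_{4,3}$ and $\eta(Z)=g(\xi,Z)$, so that it becomes $g_{4,3}(\cdot,Z)$ applied to $-\varphi(\nabla_\xi\xi,Y,\xi)\xi$. Collecting the three coefficients of $Z$ produces a compact form
$$\mathcal{F}^{11}(X,Y,Z)=\eta(X)\,g_{4,3}\!\bigl(V(Y),Z\bigr),$$
where $V(Y)$ is precisely the vector appearing in the statement of the theorem.

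With this representation in hand, both implications follow at once from non-degeneracy of $g_{4,3}$. If $V(Y)=0$ for every $Y$, then $\mathcal{F}^{11}\equiv 0$. Conversely, if $V(Y_{0})\neq 0$ for some $Y_{0}$, then non-degeneracy produces $Z_{0}$ with $g_{4,3}(V(Y_{0}),Z_{0})\neq 0$, and the choice $X=\xi$ (so that $\eta(X)=1$) exhibits $\mathcal{F}^{11}(\xi,Y_{0},Z_{0})\neq 0$.

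I do not anticipate a serious obstacle: the argument is essentially a rewriting, and the only step requiring genuine care is the sign bookkeeping arising from $g=-g_{4,3}$, from $g_{4,3}(\xi,\xi)=-1$, and from the antisymmetry of $\varphi$. These conventions are precisely what dictate the pattern of minus signs and the position of $\xi$ in the algebraic condition displayed in the theorem.
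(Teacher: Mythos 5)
Your proposal is correct and follows essentially the same route as the paper: both start from the expression for $\mathcal{F}^{11}$ derived in Theorem~\ref{thm2}, drop the $(\nabla_{\xi}\varphi)(\xi,Y,Z)$ term using the hypothesis, rewrite the remainder as $\eta(X)\,g_{4,3}(V(Y),Z)$ via (\ref{ucform}) and $\eta(Z)=-g_{4,3}(\xi,Z)$, and conclude by non-degeneracy. The only discrepancy is that your $V(Y)$ correctly carries the factor $\eta(Y)$ on the $P(\nabla_{\xi}\xi,\xi)$ term, which matches the paper's proof but is missing from the theorem's displayed equation (evidently a typo there).
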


\begin{proof}
$\mathcal{F}^{11}=0$ if and only if
$$
\begin{array}{rcl}
0&=&-(\nabla_{\xi}\varphi)(\xi,Y,Z)-\varphi(\nabla_{\xi}\xi,Y,Z)+\eta(Z)\varphi(\nabla_{\xi}\xi,Y,\xi)+\eta(Y)\varphi(\nabla_{\xi}\xi,\xi,Z)\\
&&=-g_{4,3}(P(\nabla_{\xi}\xi,Y),Z)-g_{4,3}(\varphi(\nabla_{\xi}\xi,Y,\xi)\xi,Z)+g_{4,3}(\eta(Y)P(\nabla_{\xi}\xi,\xi),Z)
\end{array}$$
Since the metric $g_{4,3}$ is non-degenarate, the result follows.
\end{proof}

It is known that $W_2=\mathbb{G}_5\oplus\mathbb{G}_6\oplus\ldots\mathbb{G}_{10}$ and $F^{W_2}=F^5\oplus F^6\oplus\ldots\oplus F^{10}$, \cite{zamkovoyrevisited}. We state the theorem below.

\begin{theorem} For an arbitrary $G_2^*$ structure $\varphi$ and
an a.p.m.s $(\phi, \xi, \eta, g)$ induced by $\varphi$, if $\xi$ is parallel, then $\mathcal{F}^{W_2}=0$.
\end{theorem}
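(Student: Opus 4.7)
The plan is to invoke formula (\ref{covariant}) under the hypothesis $\nabla\xi=0$. The second term $\varphi(\nabla_X\xi,Y,Z)$ vanishes identically, so the fundamental tensor $F=\nabla\Phi$ reduces to
$$F(X,Y,Z)=-(\nabla_X\varphi)(\xi,Y,Z).$$
Because $\eta=g(\xi,\cdot)$ and the metric is parallel, one also gets $\nabla\eta=0$. From the skew-symmetry of $\varphi$ in the last two entries one immediately obtains $F(X,\xi,Z)=F(X,Y,\xi)=0$, and of course $\phi\xi=P(\xi,\xi)=0$ is available throughout.

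Next I would recall from \cite{zamkovoyrevisited} the six explicit projection formulas for $\mathcal{F}^5,\mathcal{F}^6,\ldots,\mathcal{F}^{10}$, whose direct sum is $\mathcal{F}^{W_2}$. Each one is a specific horizontal symmetrization of $F$ under $\phi$, together with $\eta$-contractions. Every piece carrying an $\eta(Y)$, $\eta(Z)$ or $\eta(X)$ factor collapses by the vanishing observations above (or by parallelism of $\eta$), while every piece carrying an explicit $\nabla\xi$ or $\nabla\eta$ drops out by hypothesis. The residual, purely horizontal parts reduce to specific alternations of $(\nabla_{\phi^2 X}\varphi)(\xi,\phi^2 Y,\phi^2 Z)$; using the skew-symmetry of $\varphi$ in its three arguments together with the compatibility $g(\phi X,\phi Y)=-g(X,Y)+\eta(X)\eta(Y)$, I would check that each such alternation vanishes, giving $\mathcal{F}^k=0$ for $k=5,\ldots,10$ and hence $\mathcal{F}^{W_2}=0$.

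The main obstacle is bookkeeping: handling all six projections one by one is tedious and error-prone. A more economical route, which I expect to be the cleanest packaging of the argument, is to characterize $W_2$ as the complement of $W_1\oplus W_3\oplus W_4$ inside $\mathcal{F}$ and to verify in one step that the simplified $F=-(\nabla_X\varphi)(\xi,\cdot,\cdot)$, together with $\nabla\eta=0$, already satisfies the defining identities of $W_1\oplus W_3\oplus W_4$. This sidesteps the six separate symmetrization checks and reduces the proof to invoking the defining identities of only three subspaces rather than six.
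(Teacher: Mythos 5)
Your central mechanism is the same one the paper uses: under $\nabla\xi=0$, identity (\ref{covariant}) reduces $F$ to $F(X,Y,Z)=-(\nabla_X\varphi)(\xi,Y,Z)$, hence $F(X,Y,\xi)=-(\nabla_X\varphi)(\xi,Y,\xi)=0$ by skew-symmetry, and since every summand of the $W_2$-projection is an $\eta$-contraction against a value of the form $F(\cdot,\cdot,\xi)$, the projection vanishes. The paper packages this far more economically: it invokes the single closed formula $\mathcal{F}^{W_2}(X,Y,Z)=-\eta(Y)F(\phi^2X,\phi^2Z,\xi)+\eta(Z)F(\phi^2X,\phi^2Y,\xi)$, computes $F(\phi^2X,\phi^2Z,\xi)=-\varphi(\nabla_X\xi,Z,\xi)+\eta(X)\varphi(\nabla_\xi\xi,Z,\xi)$, and reads off the vanishing, so there is no need to treat $\mathcal{F}^5,\ldots,\mathcal{F}^{10}$ one at a time, nor to detour through the complement of $W_1\oplus W_3\oplus W_4$. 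One claim in your sketch is off and worth correcting: the projections onto $\mathbb{G}_5,\ldots,\mathbb{G}_{10}$ have no ``residual, purely horizontal parts'' --- every term carries an $\eta(Y)$ or $\eta(Z)$ factor multiplying an expression $F(\cdot,\cdot,\xi)$. Had horizontal alternations of $(\nabla_{\phi^2X}\varphi)(\xi,\phi^2Y,\phi^2Z)$ actually appeared, they would \emph{not} vanish by skew-symmetry of $\varphi$: that is exactly the $W_1=\mathbb{G}_1\oplus\cdots\oplus\mathbb{G}_4$ content, which the paper observes need not vanish even for parallel $\xi$ (e.g.\ $F^3+F^4$). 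So your argument goes through, but only because that final step is vacuous, not because the skew-symmetry check you describe would succeed.
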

\begin{proof} The identity (\ref{covariant}) implies
$$
\begin{array}{rcl}
F(\phi^2X,\phi^2Z,\xi)&=&-\varphi(\nabla_{\phi^2X}\xi,\phi^2Z,\xi)\\
&=&-\varphi(\nabla_{X-\eta(X)\xi}\xi,Z,\xi)\\
&=&-\varphi(\nabla_{X}\xi,Z,\xi)+\eta(X)\varphi(\nabla_{\xi}\xi,Z,\xi).
\end{array}$$
Thus
$$
\begin{array}{rcl}
\mathcal{F}^{W_2}(X,Y,Z)&=&-\eta(Y)F(\phi^2X,\phi^2Z,\xi)+\eta(Z)F(\phi^2X,\phi^2Y,\xi)\\
&=&\eta(Y)\varphi(\nabla_{X}\xi,Z,\xi)-\eta(Y)\eta(X)\varphi(\nabla_{\xi}\xi,Z,\xi)\\
&&-\eta(Z)\varphi(\nabla_{X}\xi,Y,\xi)+\eta(Z)\eta(X)\varphi(\nabla_{\xi}\xi,Y,\xi),
\end{array}
$$
which is zero if $\xi$ is parallel.
\end{proof}

If $\xi$ is parallel, $F^{W_2}=0$ and thus $F^5=F^6=\ldots=F^{10}=0$. Also by Theorem \ref{thm1}, $F^{12}$=0. So if the characteristic vector field is parallel, the a.p.m.s. is in $\mathbb{G}_1\oplus\mathbb{G}_2\oplus\mathbb{G}_{3}\oplus\mathbb{G}_{4}\oplus\mathbb{G}_{11}$.

In addition, we have

 \begin{theorem} \label{thm5}
 If $\mathcal{F}^{W_2}=0$, then $\nabla_{\phi X}\xi = 0$ for an almost paracont metric structure obtained from a general $G_2^*$ structure.
 \end{theorem}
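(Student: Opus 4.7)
The plan is to use the explicit formula for $\mathcal{F}^{W_2}$ derived in the previous theorem and strip it down by clever substitutions until only $\nabla_{\phi X}\xi$ remains, all the while exploiting that $\eta(\phi X)=0$, that $\phi(Y)=P(\xi,Y)$, and that $g_{4,3}$ is non-degenerate.

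First I would set $Y=\xi$ in the formula
\begin{eqnarray*}
\mathcal{F}^{W_2}(X,Y,Z)&=&\eta(Y)\varphi(\nabla_{X}\xi,Z,\xi)-\eta(Y)\eta(X)\varphi(\nabla_{\xi}\xi,Z,\xi)\\
&&-\eta(Z)\varphi(\nabla_{X}\xi,Y,\xi)+\eta(Z)\eta(X)\varphi(\nabla_{\xi}\xi,Y,\xi).
\end{eqnarray*}
Since $\varphi$ is a 3-form, both terms with $Y=\xi$ in the last slot vanish (they give $\varphi(\cdot,\xi,\xi)$), and using $\eta(\xi)=1$, the hypothesis $\mathcal{F}^{W_2}=0$ collapses to
$$\varphi\bigl(\nabla_{X}\xi-\eta(X)\nabla_{\xi}\xi,\,Z,\,\xi\bigr)=0$$
for all $X,Z$.

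Next I would replace $X$ by $\phi X$. Because $\eta\circ\phi=0$ (which follows from $\phi\xi=P(\xi,\xi)=0$ and skew-symmetry of $\Phi$), the corrective $\eta(X)\nabla_\xi\xi$ term disappears, leaving $\varphi(\nabla_{\phi X}\xi,Z,\xi)=0$ for all $Z$. Using $\varphi(A,B,C)=g_{4,3}(P(A,B),C)$ and cyclic rearrangement, this reads $g_{4,3}(\phi(\nabla_{\phi X}\xi),Z)=0$ for all $Z$, so by non-degeneracy $\phi(\nabla_{\phi X}\xi)=0$.

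Finally I would apply $\phi$ once more to get $\phi^2(\nabla_{\phi X}\xi)=\nabla_{\phi X}\xi-\eta(\nabla_{\phi X}\xi)\xi=0$, and observe that differentiating $g(\xi,\xi)=1$ gives $\eta(\nabla_{\phi X}\xi)=0$, so $\nabla_{\phi X}\xi=0$. The only real subtlety is the initial substitution $Y=\xi$ followed by $X\mapsto\phi X$; once $\eta\circ\phi=0$ is noted, the remaining steps are straightforward translations between $\varphi$, $P$ and $\phi$, so I do not anticipate a genuine obstacle here.
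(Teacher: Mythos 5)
Your proposal is correct and follows essentially the same route as the paper: isolate the $\varphi(\nabla_X\xi,\cdot,\xi)$ term from the $\mathcal{F}^{W_2}$ formula (you do this by setting $Y=\xi$, the paper by dualizing and pairing with $\xi$, which is the same computation), kill the $\eta(X)\nabla_\xi\xi$ correction by substituting $X\mapsto\phi X$, and invoke non-degeneracy of $g_{4,3}$. Your final step via $\phi^2=I-\eta\otimes\xi$ together with $\eta(\nabla_{\phi X}\xi)=0$ is just a repackaging of the paper's argument that $P(\nabla_{\phi X}\xi,\xi)=0$ forces $\nabla_{\phi X}\xi=k\xi$ with $k=0$.
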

\begin{proof} Assume that
$$
\begin{array}{rcl}
\mathcal{F}^{W_2}(X,Y,Z)
&=&\eta(Y)\varphi(\nabla_{X}\xi,Z,\xi)-\eta(Y)\eta(X)\varphi(\nabla_{\xi}\xi,Z,\xi)\\
&&-\eta(Z)\varphi(\nabla_{X}\xi,Y,\xi)+\eta(Z)\eta(X)\varphi(\nabla_{\xi}\xi,Y,\xi)=0.
\end{array}
$$
Then
$$-\eta(Y)P(\nabla_X\xi,\xi)+\eta(Y)\eta(X)P(\nabla_{\xi}\xi,\xi)+\{\varphi(\nabla_X\xi,Y,\xi)-\eta(X)\varphi(\nabla_{\xi}\xi,Y,\xi)\}\xi=0.$$
Take the inner product $\xi$ which gives
$$\eta(X)\varphi(\nabla_{\xi}\xi,Y,\xi)-\varphi(\nabla_X\xi,Y,\xi)=0.$$
Replacing $X$ by $\phi(X)$ yields
$$\varphi(\nabla_{\phi X}\xi,Y,\xi)=-g_{4,3}(P(\nabla_{\phi X}\xi,\xi),Y)=0.$$
Since $g_{4,3}$ is non-degenerate, we obtain $P(\nabla_{\phi X}\xi,\xi)=0$, which holds if and only if $\nabla_{\phi X}\xi$ and $\xi$ are linearly dependent. Let $\nabla_{\phi X}\xi=k\xi$ for a function $k$. Then
$$0=g_{4,3}(\nabla_{\phi X}\xi,\xi)=kg_{4,3}(\xi,\xi)=-k,$$
and thus $\nabla_{\phi X}\xi=k\xi=0$ for all $X$.
\end{proof}

Next we give a condition for the a.p.m.s to be normal.

\begin{theorem}\label{thmnormal} For any $G_2^*$ structure, $(\phi, \xi, \eta, g)$ is normal if and only if
\begin{equation}\label{normal}
\nabla_XZ-X[\eta(Z)]-P(\xi,\nabla_X(P(\xi,Z)))+\nabla_{P(\xi,X)}(P(\xi,Z))-P(\xi,\nabla_{P(\xi,X)}Z)=0.
\end{equation}

\end{theorem}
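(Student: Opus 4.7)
The plan is to convert the intrinsic definition of normality for an almost paracontact metric structure into a statement involving only the Levi-Civita connection and the cross product $P$. Recall that $(\phi, \xi, \eta, g)$ is normal exactly when the tensor
$$N^{(1)}(X, Z) := \phi^{2}[X, Z] + [\phi X, \phi Z] - \phi[\phi X, Z] - \phi[X, \phi Z] - 2\, d\eta(X, Z)\, \xi$$
vanishes identically, so the task is to rewrite $N^{(1)} = 0$ in the form (\ref{normal}).

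First I would replace every Lie bracket $[U, V]$ by $\nabla_{U} V - \nabla_{V} U$, which is permitted because $\nabla$ is torsion-free, and expand $2\, d\eta(X, Z) = X[\eta(Z)] - Z[\eta(X)] - \eta([X, Z])$. The identity $\phi^{2} W = W - \eta(W)\xi$ produces a term $-\eta([X, Z])\xi$ from $\phi^{2}[X, Z]$ that cancels the corresponding piece of $-2\, d\eta(X, Z)\xi$. After this cancellation I would substitute $\phi(\cdot) = P(\xi, \cdot)$ everywhere, as justified by (\ref{ucform}), and collect the resulting terms according to whether $X$ or $Z$ occupies the first slot. This should display $N^{(1)}(X, Z)$ as the antisymmetrization in $(X, Z)$ of the expression on the left of (\ref{normal}), so that normality is equivalent to the vanishing of (\ref{normal}) once the corresponding symmetrization is accounted for.

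The main obstacle will be the careful bookkeeping of $\xi$-directed correction terms: every application of $\phi^{2}$ to a covariant derivative generates an $\eta(\nabla_{\bullet} \cdot)\xi$ contribution that must be matched, without sign errors, against the scalar multiples of $\xi$ coming from $X[\eta(Z)]$ and $Z[\eta(X)]$. A secondary technical difficulty is handling derivatives of $P(\xi, Z)$: the Leibniz rule produces $(\nabla_{X} P)(\xi, Z) + P(\nabla_{X}\xi, Z) + P(\xi, \nabla_{X} Z)$, and the $\nabla P$ piece must be rewritten using (\ref{covariant}) so that all $\nabla \varphi$ contributions sit on the same footing as in Theorems \ref{thm1}--\ref{thm5}. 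A final subtlety is justifying the ``only if'' direction: one must verify that no extraneous symmetric part of the left-hand side of (\ref{normal}) survives, which can be done by testing the identity on pairs involving $\xi$, using $\phi\xi = P(\xi,\xi) = 0$, and invoking the non-degeneracy of $g_{4,3}$ to peel off the individual summands.
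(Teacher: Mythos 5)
Your starting point is genuinely different from the paper's. The paper never touches the Nijenhuis tensor $N^{(1)}$ directly: it quotes from \cite{zamkovoyrevisited} the characterization of normality as $F(X,Y,\phi Z)+F(\phi X,Y,Z)+F(X,\phi Y,\eta(Z)\xi)=0$, which is already a pointwise identity in $\nabla\Phi$ that is \emph{not} antisymmetric in the pair $(X,Z)$, and then simply unwinds it using (\ref{covariant}), the Leibniz rule, the identity $\varphi(\xi,Y,\phi Z)=-g_{4,3}(Y,\phi^2Z)$, and the non-degeneracy of $g_{4,3}$. Your expansion of $N^{(1)}$ via the torsion-free connection is correct as far as it goes: writing $E(X,Z)$ for the left-hand side of (\ref{normal}) (with the $\xi$-factor restored on the $X[\eta(Z)]$ term), one checks that exactly $N^{(1)}(X,Z)=E(X,Z)-E(Z,X)$. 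This cleanly proves the ``if'' direction: $E\equiv 0$ forces $N^{(1)}\equiv 0$.

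The gap is the ``only if'' direction, and it is not the minor bookkeeping issue you describe. Since $N^{(1)}$ sees only the antisymmetrization of $E$, normality by itself gives no control over the symmetric part $E(X,Z)+E(Z,X)$, and that symmetric part is not identically zero as a formal consequence of the structure equations: for instance $E(\xi,\xi)=\nabla_\xi\xi$ (every $\phi$-term dies because $\phi\xi=0$ and $\eta(\xi)=1$), which is nonzero for a general a.p.m.s. So you must prove that normality kills the symmetric part as well, and ``testing the identity on pairs involving $\xi$'' cannot accomplish this, because the symmetric part is a full tensor, not determined by its values on such pairs. What is actually required is the identity relating $\nabla\Phi$ to $N^{(1)}$ (together with $d\Phi$ and $d\eta$) that holds for almost paracontact \emph{metric} structures --- i.e.\ precisely the lemma from \cite{zamkovoyrevisited} that the paper takes as its starting point. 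Unless you import or reprove that lemma, your argument establishes only one of the two implications.
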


\begin{proof}The normality condition is
\begin{equation}\label{normalitycondition}F(X,Y,\phi Z)+F(\phi X,Y,Z)+F(X,\phi Y,\eta(Z)\xi)=0,\end{equation}
see \cite{zamkovoyrevisited}. We write this equation by using the properties of the $G_2^*$ structure $\varphi$.

$$
\begin{array}{rcl}
0&=&F(X,Y,\phi Z)+F(\phi X,Y,Z)+F(X,\phi Y,\eta(Z)\xi)\\
&=&-(\nabla_X\varphi)(\xi,Y,\phi Z)-\varphi(\nabla_X\xi,Y,\phi Z)-(\nabla_{\phi X}\varphi)(\xi,Y,Z)\\
&&-\varphi(\nabla_{\phi X}\xi,Y,Z)-(\nabla_X\varphi)(\xi,\phi Y,\eta(Z)\xi)-\varphi(\nabla_X\xi,\phi Y,\eta(Z)\xi)\\
&=&-X[\varphi(\xi,Y,\phi Z)]+\varphi(\xi,\nabla_XY,\phi Z)+\varphi(\xi,Y,\nabla_X(\phi Z))\\
&&-\phi X[\varphi(\xi,Y,Z)]+\varphi(\xi,\nabla_{\phi X}Y,Z)+\varphi(\xi,Y,\nabla_{\phi X}Z)-\varphi(\nabla_X\xi,\phi Y,\eta(Z)\xi)\\
&=&g_{4,3}(\nabla_XZ,Y)+g_{4,3}(\xi,Y)g_{4,3}(\nabla_X\xi,Z)+g_{4,3}(\xi,Y)g_{4,3}(\xi,\nabla_XZ)\\
&&+\varphi(\xi,Y,\nabla_X(\phi Z))+g_{4,3}(\nabla_{\phi X}(\phi Z),Y)+\varphi(\xi,Y,\nabla_{\phi X}Z).
\end{array}
$$
The result follows since $g_{4,3}$ is non-degenerate.
\end{proof}

By Theorem \ref{thmnormal}, we state the following.

\begin{corollary}\label{normallik} If $(\phi, \xi, \eta, g)$ is a normal structure obtained from a general $G_2^*$ structure, then $\nabla_{\xi}\xi= 0$.
\end{corollary}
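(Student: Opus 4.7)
The plan is to specialize the normality identity (\ref{normal}) supplied by Theorem \ref{thmnormal} to the single choice $X=Z=\xi$ and watch every term but one collapse. Because the corollary asserts an equation involving only $\nabla_\xi\xi$, the natural move is to feed $\xi$ into both free slots of (\ref{normal}) and let the algebraic structure do the work.

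Two ingredients drive the collapse. The first is $P(\xi,\xi)=0$, which follows from $\varphi$ being a $3$-form (hence alternating in its arguments): for every vector field $Z$ one has
$$g_{4,3}(P(\xi,\xi),Z)=\varphi(\xi,\xi,Z)=0,$$
and non-degeneracy of $g_{4,3}$ then forces $P(\xi,\xi)=0$. The second is $\eta(\xi)=1$, which is part of the definition of an almost paracontact structure and immediately gives $\xi[\eta(\xi)]=\xi[1]=0$.

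With these in hand, substituting $X=Z=\xi$ in (\ref{normal}) eliminates every term except $\nabla_\xi\xi$: the term $-\xi[\eta(\xi)]$ vanishes by the second ingredient, while each of the remaining three terms, namely $-P(\xi,\nabla_\xi(P(\xi,\xi)))$, $\nabla_{P(\xi,\xi)}(P(\xi,\xi))$ and $-P(\xi,\nabla_{P(\xi,\xi)}\xi)$, contains $P(\xi,\xi)=0$ either as the argument differentiated or as the subscript of the covariant derivative, and so is zero. What survives from the left-hand side is precisely $\nabla_\xi\xi$, giving the desired $\nabla_\xi\xi=0$.

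There is no genuine obstacle here: the real work was done in establishing Theorem \ref{thmnormal}, and once (\ref{normal}) is available the corollary follows by a one-line substitution. As a consistency check, the conclusion is also compatible with Theorem \ref{thm1}, since a normal structure in particular has $\mathcal{F}^{12}=0$, which by Theorem \ref{thm1} already forces $\nabla_\xi\xi=0$.
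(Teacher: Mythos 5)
Your proof is correct and follows essentially the same route as the paper: both deduce the corollary by specializing the normality identity (\ref{normal}) from Theorem \ref{thmnormal}, using $P(\xi,\xi)=0$ to kill all but the $\nabla_\xi\xi$ term. The only cosmetic difference is that you substitute $X=Z=\xi$ directly into the vector equation, whereas the paper first pairs (\ref{normal}) with $\xi$, keeps $Z$ free, sets $X=\xi$, and concludes via non-degeneracy of $g_{4,3}$; both are one-line specializations of the same identity.
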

\begin{proof} If $(\phi, \xi, \eta, g)$ is normal, then (\ref{normal}) holds. In (\ref{normal}), take the inner product of both sides with $\xi$ with respect to the metric $g_{4,3}$.
Then we have
$$-g_{4,3}(\nabla_X\xi,Z)+g_{4,3}(\nabla_{\phi X}(\phi Z),\xi)=0.$$
Replacing $X$ by $\xi$, we get $g_{4,3}(\nabla_{\xi}\xi,Z)=0$, which implies $\nabla_{\xi}\xi=0$.
\end{proof}

Now we study the property of the characteristic vector field of a paracontact structure.

\begin{theorem}\label{paracontact} For a paracontact $(\phi, \xi, \eta, g)$ obtained from any $G_2^*$, we have $\nabla_{\xi}\xi= 0$.
\end{theorem}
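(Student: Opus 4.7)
The strategy is to exploit the defining property of a paracontact structure, namely the identity $\Phi = d\eta$ between the fundamental $2$-form and the exterior derivative of $\eta$, and to test it on the pair $(\xi, X)$ for an arbitrary vector field $X$. The left-hand side vanishes for purely algebraic reasons internal to any almost paracontact structure, so the paracontact condition immediately yields $d\eta(\xi, X) = 0$; expanding this via the Levi-Civita connection then isolates $\nabla_\xi \xi$.

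In detail, I would first record that $\phi\xi = 0$: indeed $\phi^{2}\xi = \xi - \eta(\xi)\xi = 0$ since $\eta(\xi) = 1$, and the standard rank argument implicit in the almost paracontact axioms upgrades this to $\phi\xi = 0$. Hence $\Phi(\xi, X) = g(\phi\xi, X) = 0$, so the paracontact condition gives $d\eta(\xi, X) = 0$. Expanding
\[
d\eta(\xi, X) = \xi\,\eta(X) - X\,\eta(\xi) - \eta([\xi, X])
\]
and using $\eta(Y) = g(\xi, Y)$ together with the torsion-free property of $\nabla$, the term $X\,\eta(\xi)$ vanishes because $\eta(\xi) = g(\xi, \xi) = -g_{4,3}(\xi, \xi) = 1$ is constant, the $\nabla_\xi X$ contributions coming from $\xi\,\eta(X)$ and from $\eta([\xi, X])$ cancel, and $g(\xi, \nabla_X\xi) = 0$ follows from differentiating $g(\xi, \xi) = 1$. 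What remains is the identity $d\eta(\xi, X) = g(\nabla_\xi\xi, X)$, so non-degeneracy of $g$ forces $\nabla_\xi \xi = 0$.

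The only delicate point is pinning down the precise normalisation of the paracontact condition used in the twelve-class framework of \cite{zamkovoyrevisited}, since conventions differ (for instance $\Phi = d\eta$ versus $\Phi = \tfrac{1}{2}d\eta$ or a signed variant). This is inessential for the conclusion, because the left-hand side of the tested identity evaluates to zero on $(\xi, X)$, so any nonzero scalar multiple of $d\eta$ on the right still yields $d\eta(\xi, X) = 0$. Once the convention is fixed, the argument is the short torsion-free computation above, and I anticipate no further obstacle.
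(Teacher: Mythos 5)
Your proposal is correct and is in substance the same argument as the paper's: the identity you derive, $d\eta(\xi,X)=g(\nabla_\xi\xi,X)$, is exactly the specialization to $Y=\xi$ of the covariant form of the paracontact condition, $2g_{4,3}(P(\xi,X),Y)=g_{4,3}(\nabla_X\xi,Y)-g_{4,3}(\nabla_Y\xi,X)$, which the paper simply quotes from Proposition 3.2 of \cite{bizimparacontact} before plugging in $Y=\xi$ and using $g_{4,3}(\nabla_X\xi,\xi)=0$. The only cosmetic difference is that you re-derive that identity from $\Phi=d\eta$ and torsion-freeness rather than citing it (and note that $\phi\xi=P(\xi,\xi)=0$ is immediate here, so your rank argument is not needed).
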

\begin{proof} Let $(\phi, \xi, \eta, g)$ be paracontact. By Proposition 3.2 in \cite{bizimparacontact},
$$2g_{4,3}(P(\xi,X),Y)=g_{4,3}(\nabla_X\xi,Y)-g_{4,3}(\nabla_Y\xi,X).$$

For $Y=\xi$, we get $g_{4,3}(\nabla_{\xi}\xi,X)=0$, and thus $\nabla_{\xi}\xi=0$.
\end{proof}

Now we consider an a.p.m.s with Killing characteristic vector field obtained from a $G_2^*$ structure, that is $g(\nabla_X\xi,Y)=-g(\nabla_Y\xi,X)$. By Proposition 4.7 in \cite{zamkovoyrevisited}, the a.p.m.s can only be in the classes $\mathbb{G}_1$, $\mathbb{G}_2$, $\mathbb{G}_3$, $\mathbb{G}_4$, $\mathbb{G}_5$, $\mathbb{G}_8$, $\mathbb{G}_9$, $\mathbb{G}_{11}$ and in their direct sums. That is, $F^5=F^6=F^7=F^{12}=0$ and
$$F=F^1+F^2+F^3+F^4+F^8+F^9+F^{11}.$$

By using the properties of $G_2^*$ structures, we state the followings.

\begin{theorem} Assume that $(\phi,\xi,\eta,g)$ is induced by a general $\varphi$ and $\xi$ is Killing. Then $F^{11}=0$ if and only if the $G_2^*$ structure satisfies $\xi\lrcorner\nabla_{\xi}\varphi=0$.
\end{theorem}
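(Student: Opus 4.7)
The natural starting point is the explicit expression for $\mathcal{F}^{11}$ obtained in the proof of Theorem \ref{thm2}:
\begin{equation*}
\mathcal{F}^{11}(X,Y,Z)=\eta(X)\{-(\nabla_{\xi}\varphi)(\xi,Y,Z)-\varphi(\nabla_{\xi}\xi,Y,Z)+\eta(Z)\varphi(\nabla_{\xi}\xi,Y,\xi)+\eta(Y)\varphi(\nabla_{\xi}\xi,\xi,Z)\}.
\end{equation*}
The plan is to use the Killing hypothesis to kill all three terms carrying $\nabla_{\xi}\xi$, after which the stated equivalence becomes essentially tautological.

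The first step is to verify that a Killing field $\xi$ with $g_{4,3}(\xi,\xi)=-1$ forces $\nabla_{\xi}\xi=0$. Since $g_{4,3}(\xi,\xi)$ is constant, differentiating in any direction $Y$ gives $g_{4,3}(\nabla_Y\xi,\xi)=0$. The Killing condition (equivalently formulated with $g$ or with $g_{4,3}=-g$) is the skew-symmetry $g_{4,3}(\nabla_X\xi,Y)=-g_{4,3}(\nabla_Y\xi,X)$; setting $X=\xi$ and combining the two identities yields $g_{4,3}(\nabla_{\xi}\xi,Y)=0$ for every $Y$, so non-degeneracy of $g_{4,3}$ gives $\nabla_{\xi}\xi=0$. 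This is exactly the type of argument already used in Corollary \ref{normallik} and Theorem \ref{paracontact}.

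Substituting $\nabla_{\xi}\xi=0$ into the displayed formula collapses it to $\mathcal{F}^{11}(X,Y,Z)=-\eta(X)(\nabla_{\xi}\varphi)(\xi,Y,Z)$. One direction is now immediate: if $\xi\lrcorner\nabla_{\xi}\varphi=0$ then $(\nabla_{\xi}\varphi)(\xi,Y,Z)=0$ for all $Y,Z$, whence $\mathcal{F}^{11}\equiv 0$. For the converse I would specialize $X=\xi$, using $\eta(\xi)=1$ to conclude that $(\nabla_{\xi}\varphi)(\xi,Y,Z)=0$ for every $Y,Z$, which is precisely $\xi\lrcorner\nabla_{\xi}\varphi=0$. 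I do not anticipate a serious obstacle; the only bookkeeping issue is keeping the sign convention between $g$ and $g_{4,3}$ straight when invoking the Killing identity, and this turns out to be harmless because the relation is a homogeneous linear equation in $\nabla\xi$.
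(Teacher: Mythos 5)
Your proposal is correct and follows essentially the same route as the paper: both start from the expression for $\mathcal{F}^{11}$ derived in the proof of Theorem \ref{thm2}, use the Killing hypothesis to obtain $\nabla_{\xi}\xi=0$ and collapse the formula to $-\eta(X)(\nabla_{\xi}\varphi)(\xi,Y,Z)$, and then read off the equivalence. The only difference is that you spell out why a Killing $\xi$ of constant norm satisfies $\nabla_{\xi}\xi=0$, a step the paper asserts without proof.
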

\begin{proof} Assume that $\xi\lrcorner\nabla_{\xi}\varphi=0$. Since $\xi$ is Killing, $\nabla_{\xi}\xi=0$ and the result follows by Theorem \ref{thm2}.

Conversely if $F^{11}=0$,
$$\begin{array}{rcl}
\mathcal{F}^{11}(X,Y,Z)
&=&\eta(X)\left\{-(\nabla_{\xi}\varphi)(\xi,Y,Z)-\varphi(\nabla_{\xi}\xi,Y,Z)\right.\\
&&\left.+\eta(Z)\varphi(\nabla_{\xi}\xi,Y,\xi)+\eta(Y)\varphi(\nabla_{\xi}\xi,\xi,Z)\right\}\\
&=&\eta(X)\{-(\nabla_{\xi}\varphi)(\xi,Y,Z)\}\\
&=&0
\end{array}$$
and thus $\xi\lrcorner\nabla_{\xi}\varphi=0$.
\end{proof}

Next we calculate $F^9$ for $\xi$ Killing.
\begin{theorem} Given $(\phi,\xi,\eta,g)$ obtained by $\varphi$ with $\xi$ Killing, $F^{9}=0$ if and only if
$$\eta(Y)(P(\nabla_X\xi,\xi)+\nabla_{\phi X}\xi)+(g(\nabla_{\phi Y}\xi,X)+g(\nabla_{\phi X}\xi,Y))\xi=0.$$
\end{theorem}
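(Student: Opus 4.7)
The plan is to proceed exactly as in the preceding theorems (notably Theorem \ref{thm2} and Theorem \ref{thm5}): start from the explicit formula for the projection $\mathcal{F}^9$ recorded in \cite{zamkovoyrevisited}, substitute the identity (\ref{covariant}) to rewrite every occurrence of $\nabla\Phi$ in terms of $\nabla\varphi$ and $\nabla\xi$, and then reduce using the Killing hypothesis. The Killing assumption has two effects that I will use throughout. First, $\nabla_{\xi}\xi = 0$, obtained by applying the Killing identity with $Y = \xi$ together with $g_{4,3}(\xi,\xi)=-1$; this kills every term whose inner slot is $\nabla_{\xi}\xi$. Second, the skew property $g(\nabla_X\xi,Y) = -g(\nabla_Y\xi,X)$ reorganizes the antisymmetrized $(Y,Z)$ pieces of $\mathcal{F}^9$ and is what ultimately produces the symmetric combination $g(\nabla_{\phi Y}\xi,X)+g(\nabla_{\phi X}\xi,Y)$ appearing on the right-hand side of the stated identity.

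After these substitutions, the $(\nabla_{\ast}\varphi)$-contributions in $\mathcal{F}^9$ drop out (they belong to the $W_1$-part, which is excluded from $\mathcal{F}^9$ by the classification in \cite{zamkovoyrevisited}), leaving only terms of the form $\varphi(\nabla_{\ast}\xi,\ast,\ast)$. Each such $\varphi$ is converted to a cross product via $\varphi(A,B,C) = g_{4,3}(P(A,B),C)$, and the arguments involving $\phi^2$ are expanded via $\phi^{2} = I - \eta\otimes\xi$. Collecting the result as $g_{4,3}(\mathbf{v},Z)$ for a vector $\mathbf{v}$ depending on $X$ and $Y$, I expect the identity
\begin{equation*}
\mathcal{F}^9(X,Y,Z) = g_{4,3}\bigl(\eta(Y)[P(\nabla_X\xi,\xi)+\nabla_{\phi X}\xi] + [g(\nabla_{\phi Y}\xi,X)+g(\nabla_{\phi X}\xi,Y)]\xi,\; Z\bigr),
\end{equation*}
at which point the non-degeneracy of $g_{4,3}$ yields both directions of the equivalence at once.

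The main obstacle will be bookkeeping rather than genuine conceptual difficulty. The $\mathcal{F}^9$ projection in \cite{zamkovoyrevisited} is built from $F$ evaluated on arguments carrying $\phi^2$ in multiple slots and (anti)symmetrized in $(Y,Z)$; expanding every $\phi^2$ generates a cluster of $\eta$-contraction terms, most of which must cancel pairwise or combine to form the two clean summands in the target expression. A small auxiliary fact I will invoke to drive these cancellations is $g_{4,3}(\nabla_X\xi,\xi)=0$ for every $X$, which follows by differentiating $g_{4,3}(\xi,\xi)=-1$ and which forces any surviving $\xi$-slot contraction against $\nabla_{\ast}\xi$ to vanish. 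Once this is in place, the converse direction is immediate: if the bracketed vector is zero for all $X,Y$, then the displayed formula gives $\mathcal{F}^9 \equiv 0$.
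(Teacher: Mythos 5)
Your proposal follows essentially the same route as the paper: expand the projection formula for $\mathcal{F}^9$ via the identity $(\nabla_X\Phi)(Y,Z)=-(\nabla_X\varphi)(\xi,Y,Z)-\varphi(\nabla_X\xi,Y,Z)$, use the Killing hypothesis to obtain $\nabla_\xi\xi=0$ and to pair the skew terms into the symmetric combination $g(\nabla_{\phi Y}\xi,X)+g(\nabla_{\phi X}\xi,Y)$, rewrite the remaining pieces through the cross product, and conclude from non-degeneracy of the metric once $2\mathcal{F}^9(X,Y,Z)$ is displayed as $g(\mathbf{v},Z)$. One small correction: the $(\nabla\varphi)$-contributions vanish not for the classification reason you give, but because every $F(\cdot,\cdot,\xi)$-type term entering $\mathcal{F}^9$ produces $(\nabla_{\ast}\varphi)(\xi,\ast,\xi)$, which is zero simply because $\nabla_{\ast}\varphi$ is an alternating $3$-form evaluated with a repeated argument.
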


\begin{proof}Since $\xi$ is Killing,
$$g(\nabla_{\phi X}\xi,Z)=-g(\nabla_Z\xi,\phi X)=g_{4,3}(\nabla_Z\xi,\phi X)=-\varphi(\nabla_Z\xi,X,\xi)$$
and $$g(\nabla_{\phi Z}\xi,X)=-\varphi(\nabla_X\xi,Z,\xi),$$
$$g(\nabla_{\phi X}\xi,Y)=-\varphi(\nabla_Y\xi,X,\xi),$$
$$g(\nabla_{\phi Y}\xi,X)=-\varphi(\nabla_X\xi,Y,\xi).$$ By direct calculation,
$$
\begin{array}{rcl}
4\mathcal{F}^{9}(X,Y,Z)&=&-\eta(Y)\left\{-\varphi(\nabla_X\xi,Z,\xi)\right.\\
&&\left.-g(\nabla_{\phi X}\xi,Z)+\varphi(\nabla_Z\xi,X,\xi)+g(\nabla_{\phi Z}\xi,X)\right\}\\
&&+\eta(Z)\left\{-\varphi(\nabla_X\xi,Y,\xi)\right.\\
&&\left.-g(\nabla_{\phi X}\xi,Y)+\varphi(\nabla_Y\xi,X,\xi)+g(\nabla_{\phi Y}\xi,X)\right\}.\\
&=&-\eta(Y)\left\{-2\varphi(\nabla_X\xi,Z,\xi)-2g(\nabla_{\phi X}\xi,Z)\right\}\\
&&+\eta(Z)\left\{2g(\nabla_{\phi Y}\xi,X)-2g(\nabla_{\phi X}\xi,Y)\right\}\\
&=&-\eta(Y)\left\{-2g(P(\nabla_X\xi,\xi),Z)-2g(\nabla_{\phi X}\xi,Z)\right\}\\
&&+g(\xi,Z)\left\{2g(\nabla_{\phi Y}\xi,X)-2g(\nabla_{\phi X}\xi,Y)\right\}.
\end{array}
$$
Thus
$$2\mathcal{F}^{9}(X,Y,Z)=g(\eta(Y)(P(\nabla_X\xi,\xi)+\nabla_{\phi X}\xi)+(g(\nabla_{\phi Y}\xi,X)+g(\nabla_{\phi X}\xi,Y))\xi,Z)$$
and $F^9=0$ if and only if $$\eta(Y)(P(\nabla_X\xi,\xi)+\nabla_{\phi X}\xi)+(g(\nabla_{\phi Y}\xi,X)+g(\nabla_{\phi X}\xi,Y))\xi=0.$$
\end{proof}

Next we calculate $F^8$.

\begin{theorem} Let $(\phi,\xi,\eta,g)$ be induced by $\varphi$ and let $\xi$ be Killing. Then $F^{8}=0$ if and only if
$$\eta(Y)(P(\nabla_X\xi,\xi)-\nabla_{\phi X}\xi)-(\varphi(\nabla_X\xi,Y,\xi)+\varphi(\nabla_Y\xi,X,\xi))\xi=0.$$
\end{theorem}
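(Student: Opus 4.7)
The plan is to mirror the proof of the preceding theorem for $\mathcal{F}^9$ almost verbatim, exploiting the fact that $\mathcal{F}^8$ and $\mathcal{F}^9$ arise from the same building block $F(\phi^2 X,\phi^2 Y,\xi)$ (with $\eta$-contractions) but with opposite symmetries in the free slots. So my strategy is to start from the defining formula for $\mathcal{F}^8(X,Y,Z)$ in Zamkovoyrevisited, expand each $F$ via (\ref{covariant}), and then collapse the result using the Killing identities already derived for the $\mathcal{F}^9$ calculation.

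First I would expand $F$ using $F(X,Y,Z)=-(\nabla_X\varphi)(\xi,Y,Z)-\varphi(\nabla_X\xi,Y,Z)$ in every slot of the $4\mathcal{F}^8$ formula. As in the $\mathcal{F}^9$ case, the $(\nabla_\cdot\varphi)(\xi,\cdot,\cdot)$ terms belong to $\mathcal{F}^{11}$ and do not contribute to $\mathcal{F}^8$, so only the $\varphi(\nabla_\cdot\xi,\cdot,\cdot)$ pieces remain. Then I would substitute the Killing identities already recorded in the preceding proof,
\[
g(\nabla_{\phi X}\xi,Z)=-\varphi(\nabla_Z\xi,X,\xi),\qquad g(\nabla_{\phi Z}\xi,X)=-\varphi(\nabla_X\xi,Z,\xi),
\]
together with their $Y$-analogues. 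Because the symmetrization defining $\mathcal{F}^8$ is the opposite of that defining $\mathcal{F}^9$, the pairs that cancelled inside the $\mathcal{F}^9$ bracket now reinforce each other and vice versa.

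After this collapse, the $\eta(Y)$-bracket should reduce to a multiple of $-\varphi(\nabla_X\xi,Z,\xi)+g(\nabla_{\phi X}\xi,Z)$. Using $\varphi(\nabla_X\xi,Z,\xi)=-g_{4,3}(P(\nabla_X\xi,\xi),Z)=g(P(\nabla_X\xi,\xi),Z)$, this becomes $g\bigl(P(\nabla_X\xi,\xi)-\nabla_{\phi X}\xi,\,Z\bigr)$, producing the expected minus sign between the two terms (as opposed to the plus sign in the $\mathcal{F}^9$ case). The $\eta(Z)$-bracket should reduce to the \emph{symmetric} combination $\varphi(\nabla_X\xi,Y,\xi)+\varphi(\nabla_Y\xi,X,\xi)$, which, using $\eta(Z)=g(\xi,Z)$, enters as a scalar coefficient of $\xi$ inside $g(\cdot,Z)$. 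Putting everything together should yield
\[
2\mathcal{F}^{8}(X,Y,Z)=g\bigl(\eta(Y)(P(\nabla_X\xi,\xi)-\nabla_{\phi X}\xi)-(\varphi(\nabla_X\xi,Y,\xi)+\varphi(\nabla_Y\xi,X,\xi))\xi,\ Z\bigr),
\]
and non-degeneracy of $g$ gives the stated equivalence.

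The main obstacle is purely bookkeeping: making sure the symmetrization signs in the Zamkovoyrevisited formula for $\mathcal{F}^8$ combine correctly with the signs introduced by each Killing substitution, so that (i) the $\eta(Y)$-bracket produces $P(\nabla_X\xi,\xi)-\nabla_{\phi X}\xi$ with a minus rather than the plus obtained for $\mathcal{F}^9$, and (ii) the $\eta(Z)$-bracket produces the symmetric $X\leftrightarrow Y$ combination of $\varphi(\nabla_\cdot\xi,\cdot,\xi)$ rather than the antisymmetric one. Once the sign ledger is checked against the $\mathcal{F}^9$ derivation, no new geometric input is needed.
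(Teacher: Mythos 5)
Your proposal follows the paper's proof essentially verbatim: expand the Zamkovoy--Nakova expression for the eighth projection in terms of $F(\cdot,\cdot,\xi)$ via (\ref{covariant}), substitute the Killing identities $g(\nabla_{\phi X}\xi,Z)=-\varphi(\nabla_Z\xi,X,\xi)$ and their analogues, and rewrite the result as $g(\cdot,Z)$ so that non-degeneracy gives the equivalence; the final displayed identity you predict for $2\mathcal{F}^8$ is exactly the one the paper derives. Two details to tighten: the symmetrized building-block formula you start from is really $F^5+F^8$, so you need the paper's opening observation that the trace part $F^5=\frac{\theta_{F^5}(\xi)}{2n}\{\eta(Y)g(\phi X,\phi Z)-\eta(Z)g(\phi X,\phi Y)\}$ vanishes for Killing $\xi$ before identifying the computation with $F^8$ alone; and the $(\nabla_\cdot\varphi)$ terms drop not because they \emph{belong to} $\mathcal{F}^{11}$ but because $\nabla\varphi$ is skew-symmetric and $\xi$ occupies two of its slots in every term $F(\cdot,\cdot,\xi)$ appearing in the bracket.
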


\begin{proof}If $\xi$ is Killing, then
$$F^5(X,Y,Z)=\frac{\theta_{F^5}(\xi)}{2n}\{\eta(Y)g(\phi X,\phi Z)-\eta(Z)g(\phi X,\phi Y)\}=0,$$
see \cite{zamkovoyrevisited}, and
$$g(\nabla_{\phi X}\xi,Z)=-g(\nabla_Z\xi,\phi X)=g_{4,3}(\nabla_Z\xi,\phi X)=-\varphi(\nabla_Z\xi,X,\xi),$$

$$g(\nabla_{\phi Z}\xi,X)=-\varphi(\nabla_X\xi,Z,\xi),$$

$$g(\nabla_{\phi X}\xi,Y)=-\varphi(\nabla_Y\xi,X,\xi), \ \ g(\nabla_{\phi Y}\xi,X)=-\varphi(\nabla_X\xi,Y,\xi).$$

Thus
$$
\begin{array}{rcl}
\mathcal{F}^{8}(X,Y,Z)&=&-\frac{1}{4}\eta(Y)\left\{F(\phi^2X,\phi^2Z,\xi)-F(\phi X,\phi Z,\xi)\right.\\
&&\left.+F(\phi^2Z,\phi^2X,\xi)-F(\phi Z,\phi X,\xi)\right\}\\
&&+\frac{1}{4}\eta(Z)\left\{F(\phi^2X,\phi^2Y,\xi)-F(\phi X,\phi Y,\xi)\right.\\
&&\left.+F(\phi^2Y,\phi^2X,\xi)-F(\phi Y,\phi X,\xi)\right\}\\
&=&\frac{1}{2}\eta(Y)\left\{\varphi(\nabla_X\xi,Z,\xi)+\varphi(\nabla_Z\xi,X,\xi)\right\}\\
&&-\frac{1}{2}\eta(Z)\left\{\varphi(\nabla_X\xi,Y,\xi)+\varphi(\nabla_Y\xi,X,\xi)\right\}\\
&=&\frac{1}{2}\eta(Y)\left\{g(P(\nabla_X\xi,\xi),Z)-g(\nabla_{\phi X}\xi,Z)\right\}\\
&&-\frac{1}{2}g(\xi,Z)\left\{\varphi(\nabla_X\xi,Y,\xi)+\varphi(\nabla_Y\xi,X,\xi)\right\}.
\end{array}
$$
Therefore
$$2\mathcal{F}^{8}(X,Y,Z)=g(\eta(Y)(P(\nabla_X\xi,\xi)-\nabla_{\phi X}\xi)-\varphi(\nabla_X\xi,Y,\xi)-\varphi(\nabla_Y\xi,X,\xi))\xi,Z)=0$$
if and only if $$\eta(Y)(P(\nabla_X\xi,\xi)-\nabla_{\phi X}\xi)-(\varphi(\nabla_X\xi,Y,\xi)+\varphi(\nabla_Y\xi,X,\xi))\xi=0.$$
\end{proof}

It is not easy to obtain results on $F^1$, $F^2$, $F^3$ and $F^4$ for an a.p.m.s obtained from an arbitrary $G_2^*$ structure. For example,
$$F^3(X,Y,Z)+F^4(X,Y,Z)=\frac{1}{2}\{F(\phi^2X,\phi^2Y,\phi^2Z)+F(\phi X,\phi^2Y,\phi Z)\}.$$
If $\xi$ is parallel, then
$$\begin{array}{rcl}
F(\phi^2X,\phi^2Y,\phi^2Z)+F(\phi X,\phi^2Y,\phi Z)&=&-(\nabla_X\varphi)(\xi,Y,Z)\\
&&+\eta(X)(\nabla_{\xi}\varphi)(\xi,Y,Z)-(\nabla_{\phi(X)}\varphi)(\xi,Y,Z)
\end{array}$$
and $F^3+F^4$ need not be zero. Now we give some examples.

\section{Examples}

Consider the 7-dimensional Lie algebra $\mathcal{L}$ with basis $f_1, f_2, \ldots, f_7$ and nonzero brackets
$$[f_3,f_7]=f_1, [f_4,f_7]=f_3, [f_5,f_7]=f_2, [f_6,f_7]=f_5.$$
The 3-form

\begin{equation}\label{3form} \varphi=-f^{156}-f^{236}+f^{245}-\frac{1}{2}f^{127}-f^{347},
\end{equation}
where $f^{ijk}$ denotes $f^i\wedge f^j\wedge f^k$ is a $G_2^*$ structure inducing
$$g_{\varphi}=-2f^2.f^2+f^1.f^7+2f^3.f^6-2f^4.f^5,$$
see \cite{freiberg}. Compare the coefficient of $f^2.f^2$ with that in \cite{freiberg}. For $d_{vol}=-\frac{1}{4}f^{1234567}$, the equation
(\ref{varphi})
is satisfied if the coefficient of $f^2.f^2$ is $-2$.

By Kozsul's formula, we calculate the Levi-Civita covariant derivatives of $g_{\varphi}$. The nonzero derivatives are:
$$\nabla_{f_2}{f_5}=f_1, \ \ \nabla_{f_2}{f_7}=\frac{1}{2}f_4, \ \ \nabla_{f_5}{f_7}=\frac{1}{2}f_2, \ \ \nabla_{f_7}{f_2}=\frac{1}{2}f_4,$$
$$\nabla_{f_7}{f_3}=-f_1, \ \ \nabla_{f_7}{f_4}=-f_3, \ \ \nabla_{f_7}{f_5}=-\frac{1}{2}f_2, \ \ \nabla_{f_7}{f_6}=-f_5, \ \ \nabla_{f_7}{f_7}=\frac{1}{2}f_6.$$

From the equation (\ref{ucform}), we obtain the cross product of basis elements. The nonzero cross products are:
$$P_{\varphi}(f_1,f_2)=-\frac{1}{2}f_1, \ P_{\varphi}(f_1,f_5)=-\frac{1}{2}f_3, \ P_{\varphi}(f_1,f_6)=-\frac{1}{2}f_4, \ P_{\varphi}(f_1,f_7)=-\frac{1}{4}f_2,$$
$$ P_{\varphi}(f_2,f_3)=-\frac{1}{2}f_3, \ P_{\varphi}(f_2,f_4)=-\frac{1}{2}f_4, \ P_{\varphi}(f_2,f_5)=\frac{1}{2}f_5, \ P_{\varphi}(f_2,f_6)=\frac{1}{2}f_6,$$
$$ P_{\varphi}(f_2,f_7)=-\frac{1}{2}f_7, \ P_{\varphi}(f_3,f_4)=-f_1, \ P_{\varphi}(f_3,f_6)=\frac{1}{2}f_2, \ P_{\varphi}(f_3,f_7)=-\frac{1}{2}f_5,$$
$$ P_{\varphi}(f_4,f_5)=-\frac{1}{2}f_2, \ P_{\varphi}(f_4,f_7)=-\frac{1}{2}f_6, \ P_{\varphi}(f_5,f_6)=-f_7.$$
Note that $P_{\varphi}(f_i,f_j)=-P_{\varphi}(f_j,f_i)$.

To obtain an a.p.m.s from this cross product, the characteristic vector field $\xi=a_1f_1+\ldots+a_7f_7$ should satisfy $g_{\varphi}(\xi,\xi)=-1$. Then $(\phi,\xi,\eta,g)$, where $\phi(X)=P_{\varphi}(\xi,X)$, $g=-g_{\varphi}$ and $\eta(X)=g(\xi,X)$ is an a.p.m.s on the given Lie algebra. Also $\nabla^{g_{\varphi}}=\nabla^g$ and we denote the covariant derivative by $\nabla$, see \cite{bizimparacontact}.

$g_{\varphi}(\xi,\xi)=-1$ implies
$$2a_1a_7-2a_2^2+4a_3a_6+4a_4a_5=-1.$$

Assume that $\nabla_{\xi}{\xi}=0$. Then
$$0=\nabla_{\xi}{\xi}=(a_2a_5-a_7a_3)f_1-a_4a_7f_3+a_2a_7f_4-a_6a_7f_5+\frac{a_7^2}{2}f_6.$$
Since $f_i$ are linearly independent, we get
$$a_7=0, \ \ \ a_2.a_5=0.$$

Let us choose $\xi=\frac{1}{\sqrt{2}}f_2$, that is, $a_2=\frac{1}{\sqrt{2}}$ and $a_i=0$ otherwise. We determine the class the a.p.m.s. $(\phi,\xi,\eta,g)$ belongs to.

Since $\nabla_{\xi}{\xi}=0$, we have $F^{12}=0$ by Theorem \ref{thm1}.

Next we calculate $(\nabla_{\xi}\varphi)(\xi,Y,Z)=g_{\varphi}((\nabla_{\xi}P_{\varphi})(\xi,Y),Z)=0$. For $\xi=\frac{1}{\sqrt{2}}f_2$ and $y=y_1f_1+\ldots+y_7f_7$, we have
$$(\nabla_{\xi}P_{\varphi})(Y)=\nabla_{\xi}(P_{\varphi}(\xi,Y))-P_{\varphi}(\xi,\nabla_{\xi}Y)
=\frac{1}{4}y_5f_1-\frac{1}{8}y_7f_4-\frac{1}{4}y_5f_1+\frac{1}{8}y_7f_4=0$$
and thus $F^{11}=0$ by Theorem \ref{thm2}.

Let $X_0=f_7$. Then
$$\nabla_{\phi(X_0)}\xi=\frac{1}{2}\nabla_{P_{\varphi}(f_2,f_7)}f_2=\frac{1}{2}\nabla_{-\frac{1}{2}f_7}f_2=-\frac{1}{8}f_4\neq 0,$$
which implies that $F^{W_2}\neq 0$ by Theorem \ref{thm5}.

Now we write $F^{10}$. By direct calculation,
$$\eta(Y)=\frac{2}{\sqrt{2}}y_2, \  \nabla_X\xi=\frac{\sqrt{2}}{4}x_7f_4, \ P_{\varphi}(\nabla_X\xi,\xi)=\frac{1}{8}x_7f_4, \  \varphi(\nabla_X\xi,Z,\xi)=\frac{1}{4}x_7z_5,$$

$$\phi(X)=P_{\varphi}(\xi,X)=\frac{1}{2\sqrt{2}}x_1f_1-\frac{1}{2\sqrt{2}}x_3f_3-\frac{1}{2\sqrt{2}}x_4f_4+\frac{1}{2\sqrt{2}}x_5f_5+\frac{1}{2\sqrt{2}}x_6f_6-\frac{1}{2\sqrt{2}}x_7f_7,$$
and
$$\nabla_{\phi(X)}\xi=-\frac{1}{8}x_7f_4, \ -g(\nabla_{\phi(X)}\xi,Z)=g_{\varphi}(\nabla_{\phi(X)}\xi,Z)=\frac{1}{4}x_7z_5.$$

Thus

$
\begin{array}{rcl}
4F^{10}(X,Y,Z)&=&-\eta(Y)\{-\varphi(\nabla_X\xi,Z,\xi)-g(\nabla_{\phi(X)}\xi,Z)-\varphi(\nabla_Z\xi,X,\xi)\\
&&-g(\nabla_{\phi(Z)}\xi,X)\}\\
&&+\eta(Z)\{-\varphi(\nabla_X\xi,Y,\xi)-g(\nabla_{\phi(X)}\xi,Y)-\varphi(\nabla_Y\xi,X,\xi)\\
&&-g(\nabla_{\phi(Y)}\xi,X)\}\\
&=&-\frac{2}{\sqrt{2}}y_2\{-\frac{1}{4}x_7z_5+\frac{1}{4}x_7z_5-\frac{1}{4}x_5z_7+\frac{1}{4}x_5z_7\}\\
&&+\frac{2}{\sqrt{2}}z_2\{-\frac{1}{4}x_7y_5+\frac{1}{4}x_7y_5-\frac{1}{4}x_5y_7+\frac{1}{4}x_5y_7\}\\
&=&0.
\end{array}
$

Similarly, $F^9=0$. Since

$
\begin{array}{rcl}
4(F^5(X,Y,Z)+F^8(X,Y,Z))&=&-\eta(Y)\{F(\phi^2X, \phi^2Z,\xi)-F(\phi X,\phi Z,\xi)\\
&&+F(\phi^2Z, \phi^2X,\xi)-F(\phi Z,\phi X,\xi)\}\\
&&+\eta(Z)\{F(\phi^2X, \phi^2Y,\xi)-F(\phi X,\phi Y,\xi)\\
&&+F(\phi^2Y, \phi^2X,\xi)-F(\phi Y,\phi X,\xi)\}
\end{array}
$
and
$$F(\phi^2X, \phi^2Z,\xi)-F(\phi X,\phi Z,\xi)=-\varphi(\nabla_X\xi,Z,\xi)-g_{\varphi}(\nabla_{\phi X}\xi, Z),$$
for $X=f_7$, $Y=f_2$, $Z=f_5$, we have
$$4(F^5(X,Y,Z)+F^8(X,Y,Z))=\frac{1}{\sqrt{2}}\neq 0.$$
Thus the a.p.m.s contains summands from $\mathbb{G}_5\oplus\mathbb{G}_8$.

Similarly, if $X=f_7$, $Y=f_2$, $Z=f_5$, then
$$4(F^6(X,Y,Z)+F^7(X,Y,Z))=-\frac{1}{\sqrt{2}}\neq 0,$$
implying $F^6+F^7\neq 0$.

In addition,
$$F^3(f_7,f_4,f_6)=\frac{1}{3}\{-\frac{1}{2\sqrt{2}}-\frac{1}{8^3.2\sqrt{2}}-\frac{1}{16\sqrt{2}}-\frac{1}{16.8\sqrt{2}}\}\neq 0,$$
$$F^4(f_7,f_3,f_7)=-\frac{1}{2\sqrt{2}}-\frac{1}{8^3.2\sqrt{2}}-\frac{1}{16\sqrt{2}}-\frac{1}{16.8\sqrt{2}}\neq 0$$
and
$$F^1(f_7,f_7,f_3)+F^2(f_7,f_7,f_3)=\frac{1}{2\sqrt{2}}+\frac{1}{8^3.2\sqrt{2}}-\frac{1}{16\sqrt{2}}-\frac{1}{16.4\sqrt{2}}\neq 0.$$

To sum up, $(\phi,\xi,\eta,g)$, where $\xi=\frac{1}{\sqrt{2}}f_2$ is in
$$\mathbb{G}_1\oplus\mathbb{G}_2\oplus\mathbb{G}_3\oplus\mathbb{G}_4\oplus\mathbb{G}_5\oplus\mathbb{G}_6\oplus\mathbb{G}_7\oplus\mathbb{G}_8.$$

Now we show that there is no normal structure induced by (\ref{3form}). Assume that $(\phi,g,\xi,\eta)$ is a normal structure on $\mathcal{L}$ with $\xi=a_1f_1+\ldots+a_7f_7$. Then since $g_{\varphi}(\xi,\xi)=-1$ and
$\nabla_{\xi}{\xi}=0$ by Corollary \ref{normallik},  we have
$$a_7=0, \ \ a_2.a_5=0, \ \ -2a_2^2+4a_3.a_6+4a_4.a_5=-1.$$

We check (\ref{normalitycondition}) for basis elements $f_1,\ldots,f_7$. For $X=Y=f_2$, the normality condition gives
$$\frac{a_5}{2}(\frac{a_2}{2}f_1+\frac{a_5}{2}f_3+\frac{a_6}{2}f_4)=0,$$
and from linear independence of basis elements, $a_5=0$.

For $X=f_5$ and $Y=f_1$, we get $a_6=0$. Setting $X=f_5$ and $Y=f_7$, implies $a_2=0$, which is a contradiction since
the equation $-2a_2^2+4a_3.a_6+4a_4.a_5=-1$, yields $a_2^2=\frac{1}{2}$ for $a_5=a_6=0$. Also, there is no para-Sasakian (normal and paracontact metric) structure obtained from $\varphi$.

In addition, we do not have any paracontact structure induced by (\ref{3form}). To show this, let $(\phi,g,\xi,\eta)$ be paracontact. Then by Theorem \ref{paracontact}, $\nabla_{\xi}{\xi}=0$. Thus
$$a_7=0, \ \ a_2.a_5=0, \ \ -2a_2^2+4a_3.a_6+4a_4.a_5=-1.$$
Since the a.p.m.s is paracontact,
$$-g_{\varphi}(\phi X,Y)=\frac{1}{2}\{-g_{\varphi}(\nabla_X\xi,Y)+g_{\varphi}(\nabla_Y\xi,X)\}$$
for all $X, Y$. Now $X=f_1$, $Y=f_5$ gives $a_6=0$. For $X=f_1$, $Y=f_6$, we get $a_5=0$ and $X=f_1$, $Y=f_7$ yields a contradiction to $g_{\varphi}(\xi,\xi)=-1$.

Assume that for an a.p.m.s $F^{W_2}=0$. Then for all $X$, we have $\nabla_{\phi X}\xi=0$ by Theorem \ref{thm5}.

$\nabla_{\phi f_1}\xi=0$ implies $a_7=0$. Since $\nabla_{\phi f_2}\xi=0$, we have $a_5=0$. From $\nabla_{\phi f_4}\xi=0$, we get $a_6=0$ and $\nabla_{\phi f_7}\xi=0$ gives $a_2=0$ for $\xi=a_1f_1+\ldots+a_7f_7$. In this case, $g_{\varphi}(\xi,\xi)=0\neq -1$. So $F^{W_2}\neq 0$ for any a.p.m.s induced by (\ref{3form}) on $\mathcal{L}$.

\newpage

\end{document}